\newtheorem{defi}{Definition}[section]
\newtheorem{teo}[defi]{Theorem}
\newtheorem{lem}[defi]{Lemma}
\newtheorem{os}[defi]{Remark}
\begin{document}

\title[Horofunctions on Sierpi\'{n}ski type triangles]{Horofunctions on Sierpi\'{n}ski type triangles}

\author{Daniele D'Angeli}\thanks{The author was supported by Austrian Science Fund projects FWF
P24028-N18 and P29355-N35.}
\address{Institut f\"{u}r Diskrete Mathematik\\
Technische Universit\"{a}t Graz \ \ Steyrergasse 30, 8010 Graz, Austria}
\email{dangeli@math.tugraz.at}

\subjclass[2010]{05C10, 05C60, 05A16}
\keywords{Horofunctions,
Sierpi\'{n}ski gasket, Busemann points}

\date{\today}

\begin{abstract}
We study an infinite set of graphs which are recursively
constructed from an infinite word in a finite alphabet. These
graphs are inspired by the construction of the Sierpi\'{n}ski
gasket. We show that there are infinitely many non-isomorphic such
graphs and we describe the horofunctions on the standard case.
\end{abstract}

\maketitle

\section{Introduction}
This papers deals with two combinatorial aspects related to the
so-called Sierpi\'{n}ski gasket. This  graph belongs to the class of
fractal objects of noninteger Hausdorff dimension. It is realized
by a repeated construction of an elementary shape on progressively
smaller length scales. The Sierpi\'{n}ski gasket appears in
different contexts: analysis on fractals \cite{analysis}, \cite{woess}, \cite{tep} some physical models as dimer and Ising
models \cite{taiwan}, \cite{ddn}, \cite{noidimeri}, \cite{wagner1}
and combinatorics \cite{alberi}, \cite{tutte}. Moreover the Sierpi\'{n}ski gasket
is the limit space of the Hanoi Towers group on three pegs
\cite{hanoi} establishing a connection with the theory of
self-similar groups \cite{volo}. One can construct an infinite
sequence of finite graphs which are inspired to the Sierpi\'{n}ski
gasket. In this paper we inductively construct such a sequence
getting a natural limit graph which is an infinite marked graph.
The vertices of this graph are labelled by infinite words over a
finite alphabet of three symbols. This coding has a geometrical
meaning: by reading the $n-$th letter of such infinite word we can
construct the next (finite) graph of the finite sequence, and mark it in a
precise vertex (see \cite{carpet} for comparison). The results contained in the paper are the
following: first, we give a classification, up to isomorphisms of
such infinite graphs depending on the corresponding infinite
words; then we study the horofunction of a particular case that we call \textit{standard}. The
problem of isomorphism is studied in the context of non-marked
graphs, i.e., we construct such infinite marked graphs and then we
forget the marked vertex and compare them. The study of
horofunctions is a classical topic in the setting of
$C^{\ast}$-algebras and Cayley graphs of groups, giving rise to
the description of the Cayley compactification and the boundary of
a group \cite{devin}, \cite{rief}. It is inspired by the seminal work of Gromov in the study of a suitable definition of boundary for a matric space \cite{gromov}. Given a proper matric space $X$, one associates with every point of $X$ a continuous real-valued function in the space endowed with the topology of uniform convergence on compact sets. The topological boundary of this space of functions modulo the constant functions is called the horofunction boundary of $X$ \cite{walsh}. It is immediate that in the case of an infinite graph $G$ with the usual metric $d$ one gets the discrete topology, so every function on it is continuous. This implies that $(G, d)$ is automatically complete and locally compact.
Moreover in our case $G$ is countable, and $d$ is
proper since every vertex has finite degree.
The fractal structure of the graphs studied in this paper allows to give a complete description of the horofunctions.

\section{Sierpi\'{n}ski type triangles}
In this section we define infinitely many marked graphs
that can be thought as approximations of the famous Sierpi\'{n}ski
gasket. Each of these graphs is inductively constructed from an infinite word in a finite alphabet. The vertices of such graphs are labeled by infinite words in this alphabet.

We will show
that there are infinitely many isomorphism classes of such graphs, regarded as non-marked graphs.

Let us start by fixing the finite alphabet $X=\{u,l,r\}$, and a
triangle with the vertices labeled by $u$ (up), $l$ (left), $r$
(right), with the obvious geometric meaning. Take an
infinite word $w=w_1w_2\cdots$, with $w_i\in X$. We denote by
$\underline{w}_n$ the prefix $w_1\cdots w_n$ of length $n$ of $w$.
\begin{defi}
The infinite Sierpi\'{n}ski type graph $\Gamma_{w}$ is the marked
graph inductively constructed as follows:
\begin{description}
  \item[Step 1] Mark the vertex corresponding to $w_1$ in the simple
  triangle. Denote this marked graph $\Gamma_{w}^1$.
  \item[Step 2] Take three copies of $\Gamma_{w}^n$ and glue them
  together in such a way that each one shares exactly one (extremal) vertex with
  each other copy. These copies occupy the up, left or right position in the new graph. This has, by construction, three marked points, we keep
  the one in the copy corresponding to the letter $w_{n+1}$. Call this graph $\Gamma_{w}^{n+1}$ and identify with $\underline{w}_{n+1}$ the marked vertex.
  \item[Step 3] Denote by $\Gamma_{w}$ the limit of the marked graphs
  $\Gamma_{w}^n$.
\end{description}
\end{defi}

The limit in the previous definition means that in $\Gamma_w$ we have an increasing sequence of subgraphs marked at $\underline{w}_n$ isomorphic to the graphs $\Gamma_{w}^n$.

By definition, for each $w$, the graph $\Gamma_w$ is the limit of
finite graphs $\Gamma_w^n$, $n\geq 1$. Each one of these finite
graphs has three external points, that can thought as the
\textit{boundary} of the graph $\Gamma_w^n$. These represent the points where we (possibly) glue the three copies of $\Gamma_w^n$ to get $\Gamma_w^{n+1}$. We denote them by
$U_n^w=u\cdots u$ (the upmost one), $L_n^w=l\cdots l$ (the leftmost one), $R_n^w=r\cdots r$ (the
rightmost one). More precisely, passing from $\Gamma_w^n$ to $\Gamma_w^{n+1}$, we identify $L_n^w$ and $R_n^w$ of the upper copy isomorphic to $\Gamma_w^n$ with the vertices $U_n^w$ of the left and right copies isomorphic to $\Gamma_w^n$, respectively, and we identify $L_n^w$ of the right copy isomorphic to $\Gamma_w^n$ with $R_n^w$ of the left one.

We label by the same letters the corresponding
vertices in $\Gamma_w$.

\begin{os}\rm
If we consider the word $w:=l^{\infty}$, then we get
\vspace{1cm}

\end{os}\unitlength=0,2mm
\begin{center}
\begin{picture}(500,220)
\letvertex a=(30,60)\letvertex b=(0,10)\letvertex c=(60,10)

\letvertex d=(160,110)\letvertex e=(130,60)\letvertex f=(100,10)\letvertex g=(160,10)

\letvertex h=(220,10)\letvertex i=(190,60)

\put(-10,-10){$l$} \put(80,-10){$ll$}
\put(240,-10){$w=l^{\infty}$}

\drawvertex(a){$\bullet$}\drawvertex(b){$\ast$}
\drawvertex(c){$\bullet$}\drawvertex(d){$\bullet$}
\drawvertex(e){$\bullet$}\drawvertex(f){$\ast$}
\drawvertex(g){$\bullet$}\drawvertex(h){$\bullet$}
\drawvertex(i){$\bullet$}

\drawundirectededge(b,a){} \drawundirectededge(c,b){}
\drawundirectededge(a,c){} \drawundirectededge(e,d){}
\drawundirectededge(f,e){} \drawundirectededge(g,f){}

\drawundirectededge(h,g){} \drawundirectededge(i,h){}
\drawundirectededge(d,i){} \drawundirectededge(i,e){}
\drawundirectededge(e,g){} \drawundirectededge(g,i){}
\put(0,80){$\Gamma_l$}

\put(295,200){$\Gamma_w$}

\put(95,80){$\Gamma_{ll}$}
\letvertex A=(380,210)\letvertex B=(350,160)\letvertex C=(320,110)

\letvertex D=(290,60)\letvertex E=(260,10)\letvertex F=(320,10)\letvertex G=(380,10)

\letvertex H=(440,10)\letvertex I=(500,10)
\letvertex L=(470,60)\letvertex M=(440,110)\letvertex N=(410,160)

\letvertex O=(380,110)\letvertex P=(350,60)\letvertex Q=(410,60)
\letvertex R=(450,210) \letvertex S=(560,10)
\letvertex T=(410,250)\letvertex U=(530,60)

\put(310,-10){$R^w_1$}\put(255,50){$U^w_1$}
\put(370,-10){$R^w_2$}\put(290,110){$U^w_2$}
\put(345,203){$U^w_3$}\put(490,-10){$R^w_3$}

\drawvertex(A){$\bullet$}\drawvertex(B){$\bullet$}
\drawvertex(C){$\bullet$}\drawvertex(D){$\bullet$}
\drawvertex(E){$\ast$}\drawvertex(F){$\bullet$}
\drawvertex(G){$\bullet$}\drawvertex(H){$\bullet$}
\drawvertex(I){$\bullet$}\drawvertex(L){$\bullet$}\drawvertex(M){$\bullet$}
\drawvertex(N){$\bullet$}\drawvertex(O){$\bullet$}
\drawvertex(P){$\bullet$}\drawvertex(Q){$\bullet$}

\drawundirectededge(A,R){}\drawundirectededge(A,T){}\drawundirectededge(I,S){}\drawundirectededge(I,U){}
\drawundirectededge(E,D){} \drawundirectededge(D,C){}
\drawundirectededge(C,B){} \drawundirectededge(B,A){}
\drawundirectededge(A,N){} \drawundirectededge(N,M){}
\drawundirectededge(M,L){} \drawundirectededge(L,I){}
\drawundirectededge(I,H){} \drawundirectededge(H,G){}
\drawundirectededge(G,F){} \drawundirectededge(F,E){}
\drawundirectededge(N,B){} \drawundirectededge(O,C){}
\drawundirectededge(M,O){} \drawundirectededge(P,D){}
\drawundirectededge(L,Q){} \drawundirectededge(B,O){}
\drawundirectededge(O,N){} \drawundirectededge(C,P){}
\drawundirectededge(P,G){} \drawundirectededge(D,F){}
\drawundirectededge(Q,M){}
\drawundirectededge(G,Q){}\drawundirectededge(H,L){}\drawundirectededge(F,P){}
\drawundirectededge(Q,H){}

\put(510,150){$\bullet$}\put(540,180){$\bullet$}
\end{picture}
\end{center}

%

We want to study the isomorphism problem for such graphs. We
consider these graphs as non-marked and denote by $``\simeq"$ the
corresponding equivalence relation. More explicitly, $G\simeq G'$ if there exists an isomorphism $\phi:G\rightarrow G'$. Notice that, if there exists such isomorphism $\phi$ (of two non-marked graphs), it can be seen as an isomorphism of the marked graphs $(G,v)$ and $(G',v')$ rooted at the points $v\in G$ and $v'=\phi(v)\in G'$, for any $v\in G$. We will use this observation in the sequel.

Moreover, it is clear that the limit
$\Gamma_{w}$ is obtained as an exhaustion of triangles. What $w$
detects is the position of the smaller triangles inside the bigger
ones. In what follows $d(\cdot,\cdot)$ will denote the discrete distance in the graphs regarded as metric spaces.

\begin{os}\rm
We stress the fact that, even if all (marked) graphs $\{\Gamma_w\}$ are limits of the same finite graphs $\{\Gamma^n_w\}$, they are a priori non isomorphic as non marked graphs. An easy example to show that consists in considering the graphs $\Gamma_{l^{\infty}}$ and $\Gamma_{v}$, where $v=v_1v_2\cdots$ and $v_i$ is not definitively equal to $u,l$ or $r$ (see the figure below). In this case $\Gamma_{v}$ does not contain any vertex of degree $2$, contrary to $\Gamma_{l^{\infty}}$. In particular there is no isomorphism between the two graphs.
\vspace{1cm}
\begin{center}
\begin{picture}(500,220)
\letvertex a=(30,60)\letvertex b=(0,10)\letvertex c=(60,10)

\letvertex d=(160,110)\letvertex e=(130,60)\letvertex f=(100,10)\letvertex g=(160,10)

\letvertex h=(220,10)\letvertex i=(190,60)

\put(25,65){$u$} \put(100,55){$ul$}
\put(250,172){$w=(ul)^{\infty}$}

\drawvertex(a){$\ast$}\drawvertex(b){$\bullet$}
\drawvertex(c){$\bullet$}\drawvertex(d){$\bullet$}
\drawvertex(e){$\ast$}\drawvertex(f){$\bullet$}
\drawvertex(g){$\bullet$}\drawvertex(h){$\bullet$}
\drawvertex(i){$\bullet$}

\drawundirectededge(b,a){} \drawundirectededge(c,b){}
\drawundirectededge(a,c){} \drawundirectededge(e,d){}
\drawundirectededge(f,e){} \drawundirectededge(g,f){}

\drawundirectededge(h,g){} \drawundirectededge(i,h){}
\drawundirectededge(d,i){} \drawundirectededge(i,e){}
\drawundirectededge(e,g){} \drawundirectededge(g,i){}
\put(0,80){$\Gamma_u$}

\put(295,200){$\Gamma_w$}

\put(82,90){$\Gamma_{ul}$}
\letvertex A=(380,230)\letvertex B=(350,180)\letvertex C=(320,130)

\letvertex D=(290,80)\letvertex E=(260,30)\letvertex F=(320,30)\letvertex G=(380,30)

\letvertex H=(440,30)\letvertex I=(500,30)
\letvertex L=(470,80)\letvertex M=(440,130)\letvertex N=(410,180)

\letvertex O=(380,130)\letvertex P=(350,80)\letvertex Q=(410,80)
\letvertex R=(450,230) \letvertex S=(560,30)
\letvertex T=(410,270)\letvertex U=(530,80)

\letvertex AA=(290,-20)\letvertex BB=(230,-20)\letvertex CC=(470,-20)\letvertex DD=(530,-20)

\drawvertex(A){$\bullet$}\drawvertex(B){$\ast$}
\drawvertex(C){$\bullet$}\drawvertex(D){$\bullet$}
\drawvertex(E){$\bullet$}\drawvertex(F){$\bullet$}
\drawvertex(G){$\bullet$}\drawvertex(H){$\bullet$}
\drawvertex(I){$\bullet$}\drawvertex(L){$\bullet$}\drawvertex(M){$\bullet$}
\drawvertex(N){$\bullet$}\drawvertex(O){$\bullet$}
\drawvertex(P){$\bullet$}\drawvertex(Q){$\bullet$}

\drawundirectededge(A,R){}\drawundirectededge(A,T){}\drawundirectededge(I,S){}\drawundirectededge(I,U){}
\drawundirectededge(E,D){} \drawundirectededge(D,C){}
\drawundirectededge(C,B){} \drawundirectededge(B,A){}
\drawundirectededge(A,N){} \drawundirectededge(N,M){}
\drawundirectededge(M,L){} \drawundirectededge(L,I){}
\drawundirectededge(I,H){} \drawundirectededge(H,G){}
\drawundirectededge(G,F){} \drawundirectededge(F,E){}
\drawundirectededge(N,B){} \drawundirectededge(O,C){}
\drawundirectededge(M,O){} \drawundirectededge(P,D){}
\drawundirectededge(L,Q){} \drawundirectededge(B,O){}
\drawundirectededge(O,N){} \drawundirectededge(C,P){}
\drawundirectededge(P,G){} \drawundirectededge(D,F){}
\drawundirectededge(Q,M){}
\drawundirectededge(G,Q){}\drawundirectededge(H,L){}\drawundirectededge(F,P){}
\drawundirectededge(Q,H){}
\drawundirectededge(AA,E){}\drawundirectededge(BB,E){}

\put(375,0){$\bullet$}\put(375,-15){$\bullet$}
\put(510,150){$\bullet$}\put(540,180){$\bullet$}
\end{picture}
\end{center}
\end{os}

\begin{os}\label{stesse}\rm Notice that different words may correspond to the same
vertex of $\Gamma_w^n$. More precisely for any $k$ and $n>k+1$ the following
pairs of vertices are identified $u^klv=l^kuv$,$u^krv=r^kuv$ and
$r^klv=l^krv$, for any word $v$ in the alphabet $X$. From now on we consider such elements as identified and choose just one representation for them.
\end{os}

Recall that two infinite words $w$ and $v$ are cofinal if there
exists $n$ in $\mathbb{N}$ such that $v_i= w_i$ for all $i>n$.
This is clearly an equivalence relation:  the \textit{cofinality}
and we denote it by $``\sim"$.\\

Notice that an infinite word $x$ corresponds to a vertex of $\Gamma_v$ if and only if $x\sim v$ (see Remark \ref{cofi}).

In what follows, given an infinite word $x\in \Gamma_v$ we use the notation $x\in \Gamma_v^n$ meaning that $x$ is an infinite word corresponding to a vertex belonging to the $n-$th subgraph $\Gamma_v^n$ of $ \Gamma_v$, obtained after the first $n$ steps in the construction of $\Gamma_v$. More precisely, $x\in \Gamma_v^n$ if $x\in \Gamma_v$ and it corresponds to a vertex of the subgraph $\Gamma_v^n\hookrightarrow \Gamma_v$ in the natural embedding of the finite graph $\Gamma_v^n$ into the infinite graph $\Gamma_v$.

If we want to emphasize the vertices of the finite graph $\Gamma_v^n$ we prefer using the notation $\underline{x}_n$.

\begin{lem}\label{lemma0}
If $v\sim w$ then $\Gamma_v\simeq \Gamma_w$.
\end{lem}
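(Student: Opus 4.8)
The plan is to reduce the statement to the single step where the two words first agree, i.e., to show that if $v$ and $w$ differ in only one coordinate then $\Gamma_v\simeq\Gamma_w$, and then iterate. So suppose first that $v_i=w_i$ for all $i\neq k$ and $v_k\neq w_k$; by induction on $k$ (or a direct argument on the position of the first disagreement) it suffices to treat this case, since cofinal words differ in only finitely many positions and $\simeq$ is transitive.

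For the one-coordinate case, I would build the isomorphism from the recursive structure. The point is that $\Gamma_v^{k-1}=\Gamma_w^{k-1}$ on the nose (the first $k-1$ letters only determine which boundary vertex is marked, not the abstract graph, and here those letters coincide), and the passage $\Gamma^{k-1}\rightsquigarrow\Gamma^{k}$ glues three copies of the same finite graph along their extremal vertices in a way that does \emph{not} depend on $w_k$ as far as the unmarked graph is concerned — $w_k$ only selects which of the three marked points is kept. Hence $\Gamma_v^k$ and $\Gamma_w^k$ are isomorphic as unmarked graphs; fix such an isomorphism $\psi_k$, which permutes the three copies and correspondingly permutes the boundary triple $\{U_k,L_k,R_k\}$ according to the permutation of $\{u,l,r\}$ taking the position of $v_k$ to that of $w_k$ (using the symmetry of the triangle and of the recursive construction). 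Now extend: since for $n>k$ the letters $v_n=w_n$ agree, and $\Gamma^{n+1}$ is obtained from $\Gamma^n$ by the same gluing rule identifying $L_n,R_n$ of the top copy with $U_n$ of the left, right copies and $L_n$ of the right with $R_n$ of the left (as recalled in the paragraph after the Definition), the isomorphism $\psi_k$ — which respects the boundary triple up to the relevant symmetry — extends step by step to isomorphisms $\psi_n:\Gamma_v^n\to\Gamma_w^n$ compatible with the inclusions $\Gamma^n\hookrightarrow\Gamma^{n+1}$. Taking the direct limit of the $\psi_n$ over the exhaustions $\Gamma_v=\bigcup_n\Gamma_v^n$ and $\Gamma_w=\bigcup_n\Gamma_w^n$ yields the desired graph isomorphism $\Gamma_v\simeq\Gamma_w$.

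The step I expect to be the real obstacle is verifying that $\psi_k$ can be chosen to respect the boundary triple \emph{with the correct correspondence} so that it genuinely extends over all later gluings — i.e., that the symmetry group of $\Gamma^k_w$ acts on the three extremal vertices as the full symmetric group $S_3$ (or at least transitively enough to match any $v_k$ with any $w_k$), and that this symmetry is compatible with the identifications listed in Remark \ref{stesse}. One must check the construction is symmetric under relabelling $\{u,l,r\}$, which is geometrically obvious for the triangle but needs to be tracked through the recursion; the identifications $u^klv=l^kuv$ etc. are exactly the combinatorial shadow of this symmetry and should be used to write $\psi_k$ explicitly on the level of infinite words (permuting the $k$-th letter block in the appropriate way). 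Once the base isomorphism is pinned down correctly, the inductive extension and the passage to the limit are routine.
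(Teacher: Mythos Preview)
Your direct-limit strategy is sound, but the specific $\psi_k$ you pick does not extend, and the obstacle you anticipate is in fact fatal for that choice. If $\psi_k$ is the symmetry $\sigma$ of $\Gamma^k$ with $\sigma(v_k)=w_k\ne v_k$, then a compatible $\psi_{k+1}$ must send the $v_{k+1}$-copy of $\Gamma_v^{k+1}$ to the $w_{k+1}$-copy of $\Gamma_w^{k+1}$ (the same copy, since $v_{k+1}=w_{k+1}$) and restrict to $\sigma$ there. But then the corner vertex of $\Gamma^{k+1}$ lying inside that copy, which has degree~$2$, is sent to an interior gluing vertex of degree~$4$ unless $\sigma$ fixes $v_{k+1}$. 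Iterating, $\sigma$ must fix every $v_m$ with $m>k$; for a generic tail this forces $\sigma=\mathrm{id}$, contradicting $\sigma(v_k)\ne v_k$. Verifying that $\mathrm{Aut}(\Gamma^k)$ realises the full $S_3$ on the boundary, as you propose, does not help: the issue is not the existence of symmetries but the rigidity of the extension.

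The paper's proof bypasses all of this by taking $\psi_n=\mathrm{id}$. You already note that the unmarked graph $\Gamma_v^n$ is independent of $v$; the word only selects the marked vertex, equivalently the embedding $\Gamma^{n}\hookrightarrow\Gamma^{n+1}$. Choose $n$ with $v_m=w_m$ for all $m>n$: then the two chains of embeddings coincide from level $n$ onward, so the identity maps $\psi_m=\mathrm{id}$ for $m\ge n$ form a compatible family and pass to the limit. No nontrivial permutation, no $S_3$ verification, and no one-coordinate reduction are needed. In your framework the correction is simply to take $\psi_k=\mathrm{id}$ rather than a symmetry carrying $v_k$ to $w_k$.
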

\begin{proof}
If $v\sim w$ then there is $n$ such that $v_{n+k}=w_{n+k}$ for all
$k\geq 1$. The graphs $\Gamma_v^n$ and $\Gamma_w^n$ are isomorphic
as non-marked graphs by construction. Let
$\phi_n:\Gamma_v^n\rightarrow \Gamma_w^n$ be the identity isomorphism
of non-marked graphs. Then $\phi_n$ extends to an isomorphism
$\phi :\Gamma_v\rightarrow\Gamma_w$ since $v_{n+k}=w_{n+k}$ for
each $k\geq 1$.
\end{proof}

\begin{lem}\label{lemma1}
$\Gamma_v\simeq \Gamma_w$ if and only if there exist two vertices
$x\in\Gamma_v$ and $y\in \Gamma_w$ such that, for every $n$ the
sets 
$$
\{d(x,U_n^x),d(x,L_n^x), d(x,R_n^x)\}
$$ 
and
$$
\{d(y,U_n^y),d(y,L_n^y), d(y,R_n^y)\}
$$ 
coincide.
\end{lem}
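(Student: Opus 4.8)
The plan is to reformulate the statement so that the three points $U_n^{\bullet},L_n^{\bullet},R_n^{\bullet}$ always refer to external vertices of the canonical exhausting subgraphs. If $x$ is a vertex of $\Gamma_v$ then $x$, viewed as an infinite word, satisfies $x\sim v$, so by Lemma \ref{lemma0} there is an isomorphism $\Gamma_v\cong\Gamma_x$ which is the identity on a large common finite subgraph; under it $x$ becomes the marked vertex of $\Gamma_x$ and $U_n^x,L_n^x,R_n^x$ become the external vertices of $\Gamma_x^n$. Thus it suffices to prove: $\Gamma_v\simeq\Gamma_w$ if and only if there are words $x\sim v$ and $y\sim w$ for which the multiset of distances, inside $\Gamma_x$ (resp.\ $\Gamma_y$), from the marked vertex to the three external vertices of the $n$-th subgraph is the same for every $n$. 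Write $D_n(\Gamma,z)$ for the multiset of distances from a vertex $z$ of one of our graphs to the external vertices of a level-$n$ piece containing $z$. One first records three routine facts. (i) Each level-$n$ piece is isometrically embedded: a path joining two of its vertices and leaving it must re-enter through an external vertex, and the outside portion is never shorter than the geodesic inside; in particular two distinct external vertices of a level-$n$ piece are at distance $2^{n-1}$, so $D_n(\Gamma,z)$ does not depend on the chosen piece through $z$. (ii) Level-$n$ pieces and their external vertices are intrinsic: the triangles of the graph are exactly the level-$1$ pieces together with the ``central'' triangles of the pieces $\Gamma^k$, among which the level-$1$ pieces are recognised by having an edge contained in only one triangle, and this bootstraps to all $n$; hence every graph isomorphism carries $D_n(\Gamma,\cdot)$ to $D_n(\Gamma',\cdot)$. (iii) $\mathrm{Aut}(\Gamma^n)\cong S_3$ acts faithfully on the external vertices and the coordinate map $z\mapsto(d(z,U),d(z,L),d(z,R))$ is injective on $\Gamma^n$.

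For the implication ``$\Rightarrow$'', let $\phi\colon\Gamma_v\to\Gamma_w$ be an isomorphism and put $x:=v$ (the marked vertex of $\Gamma_v$) and $y:=\phi(v)$. Composing $\phi$ with the isomorphism $\Gamma_w\cong\Gamma_y$ of Lemma \ref{lemma0} gives an isomorphism $\psi\colon\Gamma_v\to\Gamma_y$ sending the marked vertex of $\Gamma_v$ to the marked vertex of $\Gamma_y$. By (ii) the invariant $D_n(\Gamma,\cdot)$ is preserved by both $\phi$ and the Lemma \ref{lemma0} identification, and by (i) it is the multiset appearing in the statement; hence the multisets for $\Gamma_x$ at $x$ and for $\Gamma_y$ at $y$ coincide for all $n$, as required.

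For ``$\Leftarrow$'', by Lemma \ref{lemma0} we may replace $v,w$ by $x,y$ and so assume $v,w$ are the marked words and $D_n(\Gamma_v,v)=D_n(\Gamma_w,w)$ for all $n$. I will build an isomorphism as a direct limit of isomorphisms $\phi_n\colon\Gamma_v^n\to\Gamma_w^n$ that send marked vertex to marked vertex, external vertices to external vertices, and agree with the canonical inclusions $\Gamma_v^n\subset\Gamma_v^{n+1}$, $\Gamma_w^n\subset\Gamma_w^{n+1}$ (which are at positions $v_{n+1}$ and $w_{n+1}$ respectively). By (iii) such a compatible tower amounts to a sequence of permutations in $S_3$ satisfying explicit constraints, and the finite compatible towers form a finitely branching tree; by König's lemma it is enough to produce, for every $N$, one compatible tower of height $N$. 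Here one uses the recursion for the coordinate triple $\tau_n$ of the marked vertex of $\Gamma_v^n$: passing from level $n$ to level $n+1$ the entry in position $v_{n+1}$ is unchanged while the other two grow by $2^{n-1}=\mathrm{diam}(\Gamma^n)$, so the entry in position $v_n$ is always a minimum of $\tau_n$. Descending from level $N$: using $D_N(\Gamma_v,v)=D_N(\Gamma_w,w)$ pick a permutation carrying $\tau_N$ of $\Gamma_v^N$ onto $\tau_N$ of $\Gamma_w^N$, and arrange (using the freedom coming from repeated minima) that it sends $v_N$ to $w_N$; then restrict to the $v_N$-subcopy, pass to the induced permutation, and repeat at each smaller level, invoking $D_n(\Gamma_v,v)=D_n(\Gamma_w,w)$ again. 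The infinite branch given by König's lemma yields the isomorphism $\bigcup_n\phi_n\colon\Gamma_v\to\Gamma_w$.

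The main obstacle is the bookkeeping in this last construction: the permutation fixed at the top of a finite tower is no longer free lower down, so one must check it still sends $v_n$ to $w_n$ there. When $\tau_n$ has a unique minimum this is forced; the only delicate case is a repeated minimum of $\tau_n$, which by the recursion above forces the marked vertex to lie at an external vertex of a sub-piece one level below, a symmetric configuration in which the two candidate external vertices are interchangeable, so the ambiguity is harmless. Verifying this case, together with writing out facts (i)--(iii), is where all the real work lies; everything else is a direct check.
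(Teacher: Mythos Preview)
Your approach is sound but differs substantially from the paper's. The paper argues both directions via balls: for ``$\Rightarrow$'' it supposes the multisets differ at some $n$, sets $M_n$ equal to the smaller of the two maxima, and claims the balls $B_x(M_n)$ and $B_y(M_n)$ cannot be isomorphic (one contains a full copy of $\Gamma^n$ and the other does not, or the portions exceeding that copy differ); for ``$\Leftarrow$'' it simply asserts that equal multisets at every level make $B_x(M_n)\cong B_y(M_n)$ for all $n$, and that these ball isomorphisms assemble into an isomorphism of the limits. No K\"onig-type compatibility argument is given, and the intrinsic nature of level-$n$ pieces is used implicitly rather than isolated as a lemma.

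Your route is more structural: you first prove that level-$n$ pieces and their external vertices are recognisable from the graph alone (your fact~(ii)), which immediately gives ``$\Rightarrow$'', and then you build the isomorphism for ``$\Leftarrow$'' as a direct limit of compatible finite isomorphisms $\phi_n$, using K\"onig's lemma to pass from arbitrarily long finite towers to an infinite one. This is more work but also more honest about the compatibility issue that the paper's ball argument leaves unaddressed. Note that your facts~(ii) and~(iii) overlap with what the paper proves \emph{after} this lemma, in Lemma~\ref{lemma1.1}; in effect you are front-loading that content. One small caution: once you unwind the compatibility condition, the permutation attached to $\phi_{n+1}$ actually determines the one attached to $\phi_n$ (the $S_3$-action is diagonal on subcopies), so the ``tree'' of finite towers is rather thin and the K\"onig step, while correct, is lighter than your write-up suggests---the real content is exactly the repeated-minimum case you flag at the end.
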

\begin{proof}
First notice that, since $x\in \Gamma_v$ and $y\in \Gamma_w$ then $x\sim v$ and $y\sim w$ so that for $T=U,L,R$ one has $T_n^x=T_n^v$ and $T_n^y=T_n^w$ for every $n$ sufficiently large.

Suppose there exists an isomorphism $\phi:\Gamma_v\rightarrow
\Gamma_w$ such that $\phi(x)=y$. And assume there is $n$ such that
$$
\{d(x,U_n^x),d(x,L_n^x), d(x,R_n^x)\}\neq
\{d(y,U_n^y),d(y,L_n^y), d(y,R_n^y)\}.
$$ 
Denote by
$M_n:=\min\{m_x^n,m_y^n\}$ where
$$
m_x^n:=\max\{d(x,U_n^x),d(x,L_n^x), d(x,R_n^x)\}
$$ 
and
$$
m_y^n:=\max\{d(y,U_n^y),d(y,L_n^y), d(y,R_n^y)\}.$$ 
We claim that
the balls\\
$B_x(M_n)$ and $B_y(M_n)$ are not isomorphic, as graphs.
If $m_x^n\neq m_y^n$ then only one of $B_x(M_n)$ and $B_y(M_n)$
contains a copy isomorphic to $\Gamma_x^n$ (regarded as a non marked
graph). If $m_x^n= m_y^n$ then, the other two distances from $x$
and $y$ to the boundary vertices do not coincide and so the part
of the graphs $B_x(M_n)$ and $B_y(M_n)$ exceeding the copy of $\Gamma^n_x$ and $\Gamma^n_y$, respectively are not
isomorphic.

Viceversa suppose that the sets $\{d(x,U_n^x),d(x,L_n^x),
d(x,R_n^x)\}$ and\\ 
$\{d(y,U_n^y),d(y,L_n^y), d(y,R_n^y)\}$ coincide
for each $n$.
Let $\phi:\Gamma_v\rightarrow \Gamma_w$ be the map such that $\phi(x)=y$.
The balls $B_x(M_n)$ and $B_y(M_n)$ are
isomorphic for each $n$ and so the map $\phi$ is an isomorphism of (non-marked) graphs, since $\lim B_x(M_n)=\Gamma_v$ and $\lim
B_y(M_n)=\Gamma_w$ regarded as non-marked graphs.
\end{proof}

\begin{lem}\label{lemmino}
Let $\underline{x}_n, \underline{y}_n \in \Gamma_w^n$ be two
vertices such that\\
$\{d(\underline{x}_n,U_n^w),d(\underline{x}_n,L_n^w),
d(\underline{x}_n,R_n^w)\}=$ $
\{d(\underline{y}_n,U_n^w),d(\underline{y}_n,L_n^w),
d(\underline{y}_n,R_n^w)\}$\\
then the same holds for every $k\leq n$.
\end{lem}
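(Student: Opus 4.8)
The plan is to reduce the statement, by a downward induction on $k$, to one descent step: the multiset $S_n(z):=\{d(z,U_n^w),d(z,L_n^w),d(z,R_n^w)\}$ of a vertex $z=\underline{x}_n\in\Gamma_w^n$ determines $S_{n-1}(\underline{x}_{n-1})$, where $\underline{x}_{n-1}$ is the truncation of $\underline{x}_n$. Granting this, $S_n(\underline{x}_n)=S_n(\underline{y}_n)$ forces $S_{n-1}(\underline{x}_{n-1})=S_{n-1}(\underline{y}_{n-1})$, and iterating downwards gives the claim for every $k\le n$. Two auxiliary facts will be used, both of which follow from the construction by a straightforward simultaneous induction on $m$ (and are of the same nature as those invoked in the proof of Lemma~\ref{lemma1}): each of the three copies of $\Gamma_w^{m-1}$ sitting inside $\Gamma_w^m$ (hence inside $\Gamma_w$) is isometrically embedded, and the three external vertices $U_m^w,L_m^w,R_m^w$ of $\Gamma_w^m$ are pairwise at distance $a_m:=2^{m-1}$.

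For the descent step, recall the structure: $\Gamma_w^n$ is the union of an up-, a left- and a right-copy of $\Gamma_w^{n-1}$, which pairwise meet only in the single vertices $p_1=L^{\mathrm{up}}=U^{\mathrm{left}}$, $p_2=R^{\mathrm{up}}=U^{\mathrm{right}}$, $p_3=R^{\mathrm{left}}=L^{\mathrm{right}}$, with $U_n^w=U^{\mathrm{up}}$, $L_n^w=L^{\mathrm{left}}$, $R_n^w=R^{\mathrm{right}}$; moreover $\underline{x}_n=x_1\cdots x_n$ lies in the copy named by $x_n$, as the image of $\underline{x}_{n-1}=x_1\cdots x_{n-1}$. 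Put $a=a_{n-1}$ and $(\alpha,\beta,\gamma)=(d(\underline{x}_{n-1},U_{n-1}^w),d(\underline{x}_{n-1},L_{n-1}^w),d(\underline{x}_{n-1},R_{n-1}^w))$. If $x_n=u$, then $d(\underline{x}_n,U_n^w)=\alpha$ since $\underline{x}_n$ and $U^{\mathrm{up}}$ both lie in the isometrically embedded up-copy; every geodesic from $\underline{x}_n$ to $L_n^w$ must leave the up-copy through $p_1$ or $p_2$, so $d(\underline{x}_n,L_n^w)=\min(\beta+a,\gamma+2a)=\beta+a$, the equality using $\beta\le\gamma+a$ (i.e. $d(L_{n-1}^w,R_{n-1}^w)=a$); symmetrically $d(\underline{x}_n,R_n^w)=\gamma+a$. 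Hence $S_n(\underline{x}_n)=\{\alpha,\beta+a,\gamma+a\}$. The identical computation for the left- and right-copies yields $\{\alpha+a,\beta,\gamma+a\}$ and $\{\alpha+a,\beta+a,\gamma\}$ respectively.

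To invert, observe that in all three cases $S_n(\underline{x}_n)$ is obtained from $\mu:=S_{n-1}(\underline{x}_{n-1})=\{\alpha,\beta,\gamma\}$ by leaving one entry $m_0$ unchanged ($\alpha$, $\beta$, resp. $\gamma$) and adding $a$ to the other two. Since all pairwise distances among $U_{n-1}^w,L_{n-1}^w,R_{n-1}^w$ equal $a$, the triangle inequality gives $m_0\le a+(\text{either other entry of }\mu)$, so $m_0$ is $\le$ both shifted entries, i.e. $m_0=\min S_n(\underline{x}_n)$. Therefore $\mu=\{\min\nu\}\cup\bigl((\nu\setminus\{\min\nu\})-a\bigr)$ with $\nu:=S_n(\underline{x}_n)$, a quantity depending on $\nu$ and $n$ alone, which is exactly the descent step. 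When $\underline{x}_n$ is one of $p_1,p_2,p_3$ it admits two admissible labels lying in different copies (Remark~\ref{stesse}); one checks directly that both produce the same $\mu$, so the argument is insensitive to the chosen representative.

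The main obstacle is the bookkeeping in the second paragraph: determining, for a vertex of a given copy, through which gluing point a geodesic to each external vertex must pass, and checking that the resulting minima collapse to the clean expressions above — together with ensuring the auxiliary isometric-embedding and equidistance facts are in place, since every distance computed there rests on them. Once those distances are pinned down, the inversion, and hence the downward induction, is routine.
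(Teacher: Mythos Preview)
Your proof is correct and takes essentially the same route as the paper: both rest on the fact that in passing from level $k$ to level $k+1$ one boundary distance is preserved while the other two increase by $2^{k-1}$, so the level-$k$ multiset is determined by the level-$(k+1)$ one. The paper simply asserts this (phrasing it as the contrapositive) without further justification, whereas you supply the explicit inverse $\mu=\{\min\nu\}\cup\bigl((\nu\setminus\{\min\nu\})-a\bigr)$ and carefully justify the distance formulas and the gluing-point ambiguity---more thorough, but not a different approach.
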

\begin{proof}
Observe that, in general, when we pass from $\Gamma_s^n$ to $\Gamma_s^{n+1}$ exactly one of the elements in $\{d(x,U_n^s),d(x,L_n^s),
d(x,R_n^s)\}$ is preserved. More precisely if $s_n=t \in \{u,l,r\}$ then $d(x,T_n^s)=d(x,T_{n+1}^s)$, for $T\in \{U,L,R\}$. In the other cases the distance increases by $2^{n-1}$. This implies that if there is a $k$ in which the sets of distances do not coincide then they cannot coincide for $k+1$.
\end{proof}

The following result describes points with same distances from the
boundary points, as elements in the same orbit under the action of
the symmetric group. We must take into account the exceptions of the Remark (\ref{stesse}).

\begin{lem}\label{lemma1.1}
Let $\underline{x}_n,\underline{y}_n \in \Gamma_w^n$ be two
vertices. Then \\
$\{d(\underline{x}_n,U_n^w),d(\underline{x}_n,L_n^w),
d(\underline{x}_n,R_n^w)\}=$ $
\{d(\underline{y}_n,U_n^w),d(\underline{y}_n,L_n^w),
d(\underline{y}_n,R_n^w)\}$ if and only if there exists $\sigma\in
Sym(\{u,l,r\})$ such that
$\sigma(\underline{x}_n)=\underline{y}_n$, where
$$
\sigma(\underline{x}_n)=\sigma(x_1)\cdots\sigma(x_{n-1})\sigma(x_n)=\sigma(\underline{x}_{n-1})\sigma(x_n),
$$.
\end{lem}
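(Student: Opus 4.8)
The plan is to derive both implications from the single fact that a vertex of $\Gamma_w^n$ is determined by its vector of distances to the three corners. First observe that, as a non-marked graph, $\Gamma_w^n$ together with its corners $U_n^w,L_n^w,R_n^w$ does not depend on $w$ (the word enters the construction only through the choice of marked vertex); write $S_n:=\Gamma_w^n$ with corners $U_n,L_n,R_n$ labelled $u^n,l^n,r^n$, and record the elementary fact $\operatorname{diam}(S_n)=d(U_n,L_n)=d(U_n,R_n)=d(L_n,R_n)=2^{n-1}$ (induction on $n$: two of the three sub-copies are joined at a single vertex, through which any geodesic between them must pass). For $\sigma\in Sym(\{u,l,r\})$ the letterwise relabeling $\phi_\sigma\colon z_1\cdots z_n\mapsto\sigma(z_1)\cdots\sigma(z_n)$ is a well-defined graph automorphism of $S_n$ taking the $t$-corner to the $\sigma(t)$-corner: it descends to vertex classes because the three identifications of Remark~\ref{stesse} are precisely the relations $s^k t v=t^k s v$ over the three unordered pairs $\{s,t\}$ and so are permuted among themselves by $Sym(\{u,l,r\})$, and it preserves adjacency and carries the sub-copy $S^t$ of $S_{n+1}$ onto $S^{\sigma(t)}$ by a routine induction reflecting the evident symmetry of the construction.

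Write $\tau_n(z):=\bigl(d(z,U_n),d(z,L_n),d(z,R_n)\bigr)$. Since $\phi_\sigma$ permutes the corners according to $\sigma$, one has $\tau_n(\phi_\sigma(z))=\pi_\sigma\cdot\tau_n(z)$, where $\pi_\sigma$ is the coordinate permutation corresponding to $\sigma$ under $U\leftrightarrow u$, $L\leftrightarrow l$, $R\leftrightarrow r$. The ``if'' direction is now immediate: $\sigma(\underline{x}_n)=\underline{y}_n$ gives $\tau_n(\underline{y}_n)=\pi_\sigma\cdot\tau_n(\underline{x}_n)$, so the two triples are permutations of one another and the two sets in the statement coincide. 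For the ``only if'' direction it suffices to prove that $\tau_n$ is injective on $V(S_n)$: for then, if the two sets coincide, there is a coordinate permutation $\pi$ with $\pi\cdot\tau_n(\underline{x}_n)=\tau_n(\underline{y}_n)$ (match up equal entries), and choosing $\sigma$ with $\pi_\sigma=\pi$ we get $\tau_n(\phi_\sigma(\underline{x}_n))=\tau_n(\underline{y}_n)$, hence $\phi_\sigma(\underline{x}_n)=\underline{y}_n$, i.e. $\sigma(\underline{x}_n)=\underline{y}_n$.

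To prove injectivity of $\tau_n$ I would induct on $n$; for $n=1$ the three values $\tau_1(u)=(0,1,1)$, $\tau_1(l)=(1,0,1)$, $\tau_1(r)=(1,1,0)$ are distinct. For the step, decompose $S_{n+1}=S^U\cup S^L\cup S^R$ into three copies of $S_n$, in which $S^s$ and $S^t$ (for $s\ne t$) meet in the single vertex $p_{st}$, which is the $t$-corner of $S^s$ and at the same time the $s$-corner of $S^t$. As in the proof of Lemma~\ref{lemmino}, for $z\in S^t$ a geodesic to the $t$-corner of $S_{n+1}$ stays inside $S^t$, while a geodesic to either of the other corners of $S_{n+1}$ runs from $z$ through the bridge vertex joining $S^t$ to the relevant copy; consequently $\tau_{n+1}(z)$ is obtained from the ordered type $\tau_n^{S^t}(z)$ of $z$ inside $S^t\cong S_n$ by adding $2^{n-1}=\operatorname{diam}(S_n)$ to each coordinate other than the $t$-coordinate, a prescription which is consistent at the bridge vertices $p_{st}$ (they belong to two copies). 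Now let $\tau_{n+1}(z)=\tau_{n+1}(z')$. If $z$ and $z'$ lie in a common copy $S^t$, then $\tau_n^{S^t}(z)=\tau_n^{S^t}(z')$, so $z=z'$ by the inductive hypothesis. If they lie in no common copy, pick $S^s\ni z$ and $S^t\ni z'$ with $s\ne t$; then $z\ne p_{st}$ and $z'\ne p_{st}$ (a bridge vertex lies in both copies). But then the $s$-coordinate of $\tau_{n+1}(z)$ equals $d_{S^s}(z,\,s\text{-corner of }S^s)\le 2^{n-1}$, whereas the $s$-coordinate of $\tau_{n+1}(z')$ equals $d_{S^t}(z',\,s\text{-corner of }S^t)+2^{n-1}=d_{S^t}(z',p_{st})+2^{n-1}\ge 2^{n-1}+1$, a contradiction. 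Hence $\tau_{n+1}$ is injective, completing the induction and the proof.

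The main obstacle is the injectivity of $\tau_n$, and inside it the case where $z$ and $z'$ share no sub-copy: it rests on the diameter identity $\operatorname{diam}(S_n)=2^{n-1}$ together with the precise behaviour of distances at the bridge vertices in the recursion for $\tau_{n+1}$. The only other point demanding care is that $\phi_\sigma$ genuinely passes to the quotient by the identifications of Remark~\ref{stesse}, which is why I single out that those identifications form a single orbit under $Sym(\{u,l,r\})$.
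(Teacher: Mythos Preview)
Your proof is correct and takes a genuinely different route from the paper's. The paper argues the ``only if'' direction by a direct case analysis on the last letters $x_n,y_n$: invoking Lemma~\ref{lemmino} to pass to level $n-1$, it determines in each case precisely which $\sigma\in Sym(\{u,l,r\})$ must work (identity or $(l,r)$ when $x_n=y_n$; the appropriate transposition or $3$-cycle when $x_n\neq y_n$). You instead isolate a single structural fact---the injectivity of the distance-vector map $\tau_n$---and combine it with the observation that the letterwise maps $\phi_\sigma$ are automorphisms permuting the corners; both implications then fall out with no case analysis. What your approach buys is cleanliness and a reusable lemma (injectivity of $\tau_n$) that is of independent interest; what the paper's approach buys is an explicit description of $\sigma$ in terms of the data, which is closer in spirit to how the lemma is later applied in Theorem~\ref{teo1}. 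Your handling of the two delicate points---that $\phi_\sigma$ descends through the identifications of Remark~\ref{stesse}, and the separation estimate $\le 2^{n-1}$ versus $\ge 2^{n-1}+1$ for the $s$-coordinate when $z,z'$ share no sub-copy---is correct; the recursion you use for $\tau_{n+1}$ is exactly the content of the observation in the proof of Lemma~\ref{lemmino}.
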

\begin{proof}
Suppose the sets of distances coincide and proceed by induction on $n$. For $n=1$ the assertion is trivially verified.
First suppose, without loss of generality, that $x_n=y_n=u$. It follows from Lemma (\ref{lemmino}) that there exists a permutation $\sigma\in
Sym(\{u,l,r\})$ such that
$\sigma(\underline{x}_{n-1})=\underline{y}_{n-1}$. We want to prove that this permutation is the identity or the transposition $(l,r)$.
The distances of $\underline{x}_{n-1}$ and $\underline{y}_{n-1}$ from $U_{n-1}^w$ must be the same. This implies that $\{i: \ x_i=u\}=\{i: \ y_i=u\}$. For the indices which are not equal to $u$ we observe that, $d(\underline{x}_{n-1},R_{n-1}^w)$ is equal either to $d(\underline{y}_{n-1},R_{n-1}^w)$ or to $d(\underline{y}_{n-1},L_{n-1}^w)$. The first case gives the identity, the second case the transposition $(l,r)$.

Suppose now that $x_n\neq y_n$, this implies that $\underline{x}_n$ and $\underline{y}_n$ belong to different copies isomorphic to $\Gamma_w^{n-1}$ of the graph $\Gamma_w^n$. Suppose, for example that $x_n=u$ and $y_n=r$. If $d(\underline{x}_{n},L_{n}^w)=d(\underline{y}_{n},L_{n}^w)$ as before we can show that $\sigma$ is equal to $(r,u)$, since we have $d(\underline{x}_{n-1},L_{n-1}^w)=d(\underline{y}_{n-1},L_{n-1}^w)$. If the distances of $\underline{x}_{n}$ and $\underline{y}_{n}$ from one (all) of the boundary vertices do not coincide, we have $d(\underline{x}_{n},U_{n}^w)=d(\underline{y}_{n},R_{n}^w)$ and $d(\underline{x}_{n},L_{n}^w)=d(\underline{y}_{n},U_{n}^w)$, $d(\underline{x}_{n},R_{n}^w)=d(\underline{y}_{n},L_{n}^w)$. The same property holds at level $n-1$ so that there exists a permutation $\sigma$ such that $\sigma(\underline{x}_{n-1})=\underline{y}_{n-1}$. This permutation cannot be the transposition $(r,u)$ because this would imply that $d(\underline{x}_{n-1},L_{n-1}^w)=d(\underline{y}_{n-1},
L_{n-1}^w)$. And so it is the permutation $(u,r,l)$.

On the other hand we prove that permutations preserve distances from boundary in the case that $\sigma$ is a transposition, say
$\sigma=(l,r)$. One can easily verify that the same argument can be applied to any permutation of $Sym(\{u,l,r\})$.
If $n=1$ it is clear that $x_1$ and $\sigma(x_1)$
satisfy the claim. It follows that
$$
\sigma(\underline{x}_n)=\sigma(x_1)\cdots\sigma(x_{n-1})\sigma(x_n)=\sigma(\underline{x}_{n-1})\sigma(x_n),
$$
and the sets of distances from $\underline{x}_{n-1}$ and
$\sigma(\underline{x}_{n-1})$ to the boundary points coincide by
induction. These two points belong to the graph $\Gamma_w^{n-1}$,
and one is obtained by the other via the transformation
$(l,r)$ corresponding to the reflection with respect to the
vertical axis. If $x_n=u$ then both $\underline{x}_{n}$ and
$\sigma(\underline{x}_{n})$ are vertices of the upper part of
$\Gamma_w^n$ obtained from each other by the same reflection. If
$x_n=l$ (resp. $x_n=r$) the vertices $\underline{x}_{n}$ and
$\sigma(\underline{x}_{n})$ live, respectively, in the left and
right (resp. right and left) part of $\Gamma_w^n$, and so they are
obtained from each other by the reflection with respect to the
vertical axis, and in particular preserve distances to the
boundary vertices.
\end{proof}

The group $Sym(\{u,l,r\})$ consists of six elements and its action
factorizes in orbits consisting of three (e.g. the boundary
points) or six elements.

\begin{teo}\label{teo1}
There are infinitely many isomorphism classes of the graphs
$\Gamma_w$. More precisely $\Gamma_v\simeq \Gamma_w$ if and only there exists $\sigma \in Sym(\{u,l,r\})$ such that $w\sim \sigma(v)$.
\end{teo}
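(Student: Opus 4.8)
The plan is to derive both the classification and the cardinality statement from Lemmas~\ref{lemma0}, \ref{lemma1} and \ref{lemma1.1}, using one extra geometric fact: each $\sigma\in Sym(\{u,l,r\})$ acts on the labelled triangle, hence on every finite approximation $\Gamma^{n}$ (all of which are the \emph{same} non-marked graph), by a dihedral symmetry of the triangle, and this action is compatible with the gluing of Step~2 and with the identifications of Remark~\ref{stesse}. First I would prove $\Gamma_{v}\simeq\Gamma_{w}\iff w\sim\sigma(v)$ for some $\sigma$, and then deduce that there are infinitely many classes.

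\emph{Backward direction.} Suppose $w\sim\sigma(v)$. Applying $\sigma$ letterwise to $v$ gives $\sigma(\underline v_{n})=\underline{\sigma(v)}_{n}$ for every $n$, so by Lemma~\ref{lemma1.1} the triples of distances from $\underline v_{n}$ and from $\underline{\sigma(v)}_{n}$ to $U_{n},L_{n},R_{n}$ coincide for all $n$. Lemma~\ref{lemma1} then yields $\Gamma_{v}\simeq\Gamma_{\sigma(v)}$, while Lemma~\ref{lemma0} gives $\Gamma_{\sigma(v)}\simeq\Gamma_{w}$ since $\sigma(v)\sim w$; composing these isomorphisms proves $\Gamma_{v}\simeq\Gamma_{w}$.

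\emph{Forward direction.} Let $\phi\colon\Gamma_{v}\to\Gamma_{w}$ be an isomorphism and pick a vertex $x\in\Gamma_{v}$ whose label uses at least two of the three letters — possible since $\Gamma_{v}$ is infinite while at most three of its vertices are ``pure'' words $T^{\infty}$. Put $y=\phi(x)$. Exactly as in the proof of Lemma~\ref{lemma1}, $\phi$ restricts to graph isomorphisms of all balls $B_{x}(\rho)\to B_{y}(\rho)$, so $\{d(\underline x_{n},U_{n}),d(\underline x_{n},L_{n}),d(\underline x_{n},R_{n})\}=\{d(\underline y_{n},U_{n}),d(\underline y_{n},L_{n}),d(\underline y_{n},R_{n})\}$ for every $n$, and Lemma~\ref{lemma1.1} supplies $\sigma_{n}\in Sym(\{u,l,r\})$ with $\sigma_{n}(\underline x_{n})=\underline y_{n}$. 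The crux is to make the $\sigma_{n}$ stabilise: for $n$ past the cofinality threshold $\underline x_{n+1}$ extends $\underline x_{n}$ and $\underline y_{n+1}$ extends $\underline y_{n}$ (up to the $Sym$-equivariant identifications of Remark~\ref{stesse}), so comparing the length-$n$ prefixes of $\sigma_{n+1}(\underline x_{n+1})=\underline y_{n+1}$ and of $\sigma_{n}(\underline x_{n})=\underline y_{n}$ forces $\sigma_{n+1}$ and $\sigma_{n}$ to agree on every letter occurring in $\underline x_{n}$; since that letter set is eventually of size $\ge 2$ and a permutation of a three-element set is pinned down by its values on any two elements, $\sigma_{n}=\sigma$ for all large $n$. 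Letting $n\to\infty$ gives $\sigma(x)=y$ letterwise, hence $w\sim y=\sigma(x)\sim\sigma(v)$, i.e.\ $w\sim\sigma(v)$.

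\emph{Cardinality, and the main obstacle.} By the equivalence just established, the isomorphism classes of the $\Gamma_{w}$ are the orbits of the finite group $Sym(\{u,l,r\})$ acting on the set of cofinality classes of infinite words over $X$; each cofinality class is countable whereas $X^{\mathbb{N}}$ is uncountable, so there are uncountably (hence infinitely) many orbits — alternatively one exhibits an explicit infinite family of pairwise non-cofinal words no two related by a letter permutation. The step I expect to be the real obstacle is the stabilisation of the $\sigma_{n}$: one must carry along the bookkeeping forced by Remark~\ref{stesse} and separately treat the degenerate vertices $x=T^{\infty}$, where Lemma~\ref{lemma1.1} does not determine a unique permutation (there $\underline y_{n}=\sigma_{n}(T)^{n}$ forces $y=T'^{\infty}$, so $w\sim T'^{\infty}$ and $v\sim T^{\infty}$, and any $\sigma$ with $\sigma(T)=T'$ works). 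Everything else is a direct assembly of the lemmas above.
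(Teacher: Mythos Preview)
Your proof is correct and the backward direction is identical to the paper's: Lemma~\ref{lemma1.1} gives $\Gamma_v\simeq\Gamma_{\sigma(v)}$ via Lemma~\ref{lemma1}, and Lemma~\ref{lemma0} finishes.

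In the forward direction the paper takes a slightly different route. Rather than choosing a vertex $x$ with at least two distinct letters, the paper first reduces (using Lemma~\ref{lemma0}) to the case $\phi(v)=w$, so that $\phi$ restricts to an isomorphism $\Gamma_v^n\to\Gamma_w^n$ for every $n$, realised by some $\sigma_n\in Sym(\{u,l,r\})$. The stabilisation is then argued geometrically: the dihedral symmetry $\sigma_{n+1}$ of $\Gamma^{n+1}$ restricts to the \emph{same} dihedral symmetry on each of its three sub-copies of $\Gamma^{n}$, so $\sigma_{n+1}=\sigma_n$ outright, for all $n$. Your argument instead lets the $\sigma_n$ vary a priori and pins them down combinatorially, using that $\sigma_{n+1}(\underline x_n)=\underline y_n=\sigma_n(\underline x_n)$ forces agreement on every letter in $\underline x_n$, hence on all of $\{u,l,r\}$ once two letters appear. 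Both work; the paper's reduction to $\phi(v)=w$ avoids your separate treatment of the constant words $T^\infty$ and sidesteps most of the Remark~\ref{stesse} bookkeeping, while your version makes the reason for stabilisation (two values determine an element of $S_3$) completely explicit and does not rely on knowing in advance that $\phi$ respects the filtration $\{\Gamma_v^n\}$.
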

\begin{proof}
Suppose that there exists $\sigma \in Sym(\{u,l,r\})$ such that $\sigma(v)\sim w$, hence there is $N\in \mathbb{N}$ and there is
$\sigma\in \ Sym(\{u,l,r\})$ such that $\forall n\geq N$ one has
$\sigma(v_n)=w_n$. Consider the graphs $\Gamma_v$ and $\Gamma_{\sigma(v)}$ and the sequences of increasing subgraphs $\{\Gamma_v^n\}$ and $\{\Gamma_{\sigma(v)}^n\}$. From Lemma (\ref{lemma1.1}) we get
$$
\{d(v,U_n^v),d(v,L_n^v), d(v,R_n^v)\}\!=\!\{d(\sigma(v),U_n^{\sigma(v)}),d(\sigma(v),L_n^{\sigma(v)}),\! d(\sigma(v),R_n^{\sigma(v)})\}
$$
for every $n$. Hence Lemma (\ref{lemma1}) implies that $\Gamma_v$ and $\Gamma_{\sigma(v)}$ are isomorphic. Lemma (\ref{lemma0}) gives that $\Gamma_v \simeq \Gamma_w$.

Viceversa suppose there exists an isomorphism
$\phi:\Gamma_v\rightarrow \Gamma_w$. Without loss of generality we can assume that $\phi(v)=w$. If we restrict $\phi$ to the
finite graphs $\Gamma_v^n$ and $\Gamma_w^n$ we get, by Lemma
(\ref{lemma1.1}), an isomorphism $\phi_n$ which corresponds to a
permutation $\sigma \in Sym(\{u,l,r\})$. This automorphism is determined by the images
of the boundary vertices $U_n^v,L_n^v,R_n^v$. Only one of them
will coincide with the corresponding boundary vertex of the graph
$\Gamma_v^{n+1}$, the same for $U_n^w,L_n^w,R_n^w$. More precisely, if $T\in \{U,L,R\}$ is such that $T_n^v=T_{n+1}^v$ then
$\phi(T)_{n}^w=\phi(T_n^v)=\phi(T_{n+1}^v)=\phi(T)_{n+1}^w$, for $T=U,L$ or $R$. So the
the permutation yielding the isomorphism between the graphs $\Gamma_v^{n+1}$ and $\Gamma_w^{n+1}$ is given by
the same permutation $\sigma$. This implies that $\phi(v)=\sigma(v)=w$. The cofinality
follows.
\end{proof}

\begin{os}\label{cofi}\rm
We can refine the last statement by considering that each
infinite word $v$ cofinal with $w$ can be seen as a vertex of the
graph $\Gamma_w$. In fact if it belongs to the same graph of $w$,
there exists $n$ such that $\underline{v}_n$ and $\underline{w}_n$
are vertices in $\Gamma_w^n$. This implies that $v_{n+k}=w_{n+k}$
for each $k\geq 1$. So Theorem (\ref{teo1}) implies that each
isomorphism class contains exactly $6$ graphs, except the class
of constant words which contains only $3$ graphs (because the orbit
of such a word under $Sym(\{u,l,r\})$ contains only three
elements)
\end{os}

\section{Horofunctions}

In this section we explicitly compute the horofunctions for the
graphs constructed in the previous section. We need some
definitions (for more details see \cite{devin} or \cite{walsh}).

Let $G=(V,E)$ be a graph, and
$\{x_n\}_{n\in \mathbb{N}}$ be a sequence of vertices such that
$d(o, x_n)\rightarrow\infty$. For every $n$ we define the function
$$
f_n(y):=d(x_n,o)-d(x_n,y),
$$
whose limit for $n\to \infty$, considered in the space of (continuous) functions on $G$ with the topology of uniform convergence on finite sets, gives the horofunction associated with the sequence $\{x_n\}_{n\in \mathbb{N}}$.

One considers the space of horofunctions up to the equivalence
relation which identifies functions whose difference is uniformly
bounded. Points which are limit of the geodesic rays in the graph
are called \emph{Busemann points} (see \cite{Cormann}). The notion of Busemann point was introduced by Rieffel in \cite{rief}.

We want to study the horofunctions on the Sierpi\'{n}ski type graph corresponding to $w=l^{\infty}$ up to the
equivalence stated above.

\subsection{The standard case}
In this section we compute the horofunctions of the infinite graph
$\Gamma_w$ where $w=l^{\infty}$. We choose $o=w$. Let $\{x_n\}$ be
a sequence of vertices in $\Gamma_w$ such that
$d(o,x_n)\rightarrow\infty$. By construction, for each $n$ there
exists $k=k(n)$ such that $x_n\in \Gamma_w^k\setminus
\Gamma_w^{k-1}$. The following result gives a necessary condition
for the existence of the limit of the functions $f_n$. In what
follows we omit the superscript $w$.

\begin{lem}\label{lemma2}
Suppose that there exist infinitely many indices $i$ such that
$d(x_i,U_{k(i)-1}) < d(x_i,R_{k(i)-1})$ and infinitely many
indices $j$ such that 
$$
d(x_j,U_{k(j)-1})>d(x_j,R_{k(j)-1}).
$$ 
Then $\lim f_n$ does not exist.
\end{lem}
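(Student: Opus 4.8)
The plan is to exhibit a single vertex at which the numerical sequence $\{f_n\}$ already diverges; since convergence in the topology of uniform convergence on finite sets implies convergence at every point, this rules out the existence of $\lim f_n$. The vertex I would use is $u=U_1$, the top corner of the initial triangle $\Gamma^1_w$.

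Fix $n$ and write $k=k(n)$, $\alpha_n=d(x_n,U_{k-1})$, $\beta_n=d(x_n,R_{k-1})$. Recall that $o=w=l^{\infty}$ is the common left corner $L_j$ of every $\Gamma^j_w$, that $\Gamma^{k-1}_w$ sits inside $\Gamma^k_w$ as the left copy, meeting the other two copies exactly in the two vertices $U_{k-1}$ (shared with the upper copy) and $R_{k-1}$ (shared with the right copy), and that $\Gamma^k_w$ is attached to the rest of $\Gamma_w$ only at $U_k,L_k,R_k$. Consequently, distances between vertices of $\Gamma^k_w$ computed in $\Gamma_w$ coincide with those computed in $\Gamma^k_w$, and, since $x_n\in\Gamma^k_w\setminus\Gamma^{k-1}_w$ lies outside the left copy, every geodesic from $x_n$ to a vertex $z$ of $\Gamma^{k-1}_w$ must pass through $U_{k-1}$ or $R_{k-1}$, so that
$$
d(x_n,z)=\min\bigl(\alpha_n+d(U_{k-1},z),\ \beta_n+d(R_{k-1},z)\bigr).
$$
Taking $z=o$ and using $d(U_{k-1},o)=d(R_{k-1},o)=2^{k-2}$ (the distance between two boundary vertices of $\Gamma^{k-1}_w$) gives $d(x_n,o)=\min(\alpha_n,\beta_n)+2^{k-2}$.

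Next I would compute the two distances to $u$. Telescoping along the nested left copies, using $d(U_{j+1},U_j)=2^{j-1}$ at each step, gives $d(U_{k-1},u)=2^{k-2}-1$; the analogous recursion for the right corners gives $d(R_{k-1},u)=2^{k-2}$ (for $k\ge 2$). Substituting $z=u$ into the formula above and subtracting, the terms $2^{k-2}$ cancel and we obtain, for every $n$ with $k(n)\ge 2$ — which holds for all large $n$, since $d(o,x_n)\to\infty$ while each $\Gamma^K_w$ is finite, hence $k(n)\to\infty$ —
$$
f_n(u)=\min(\alpha_n,\beta_n)-\min(\alpha_n-1,\ \beta_n).
$$
A short case check finishes the argument: if $\alpha_n<\beta_n$ then $\min(\alpha_n,\beta_n)=\alpha_n$ and $\alpha_n-1<\beta_n$, so $f_n(u)=\alpha_n-(\alpha_n-1)=1$; if $\alpha_n>\beta_n$ then, the distances being integers, $\alpha_n-1\ge\beta_n$, so $f_n(u)=\beta_n-\beta_n=0$.

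By hypothesis $\alpha_n<\beta_n$ for infinitely many $n$ and $\alpha_n>\beta_n$ for infinitely many $n$, so $f_n(u)$ equals $1$ infinitely often and $0$ infinitely often; hence $\{f_n(u)\}_n$ does not converge, and a fortiori $\{f_n\}$ has no limit. I expect the real content to lie in the second paragraph: checking that $U_{k-1},R_{k-1}$ genuinely separate $x_n$ from the left copy and that distances do not decrease upon leaving $\Gamma^k_w$, together with the two explicit computations $d(U_{k-1},u)=2^{k-2}-1$ and $d(R_{k-1},u)=2^{k-2}$; once these structural facts are in place, only the elementary arithmetic above remains.
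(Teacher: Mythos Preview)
Your proof is correct and follows essentially the same approach as the paper: evaluate at the test vertex $U_1$, use that geodesics from $x_n$ into $\Gamma_w^{k-1}$ must pass through $U_{k-1}$ or $R_{k-1}$, and deduce that $f_n(U_1)$ equals $1$ along the first subsequence and $0$ along the second. Your write-up is in fact more explicit than the paper's, which asserts $d(x_i,o)=d(x_i,U_{k(i)-1})+d(o,U_{k(i)-1})$ and $d(x_i,U_1)=d(x_i,U_{k(i)-1})+d(U_1,U_{k(i)-1})$ directly and leaves the computation of $d(U_{k-1},U_1)=2^{k-2}-1$, $d(R_{k-1},U_1)=2^{k-2}$ and the symmetric case to the reader; your $\min$ formulation and the two telescoping distance calculations make these steps transparent.
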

\begin{proof}
Take $y$ such that $d(o,y)=1$, for example $y=U_1$. For the
indices $i$ such that $d(x_i,U_{k(i)-1})<d(x_i,R_{k(i)-1})$ we
have $d(x_i,o)=d(x_i,U_{k(i)-1})+d(o,U_{k(i)-1})$ and
$d(x_i,U_1)=d(x_i,U_{k(i)-1})+d(U_1,U_{k(i)-1})$ and so
$f_i(U_1)=1$. Analogously one can prove that $f_j(U_1)=0$. And so
the limit of $f_n(U_1)$ does not exist.
\end{proof}

The previous Lemma implies that the sequences $\{x_n\}$ that we have
to consider to have a limit are those such that $d(x_i,U_{k(i)-1})<d(x_i,R_{k(i)-1})$, $d(x_i,U_{k(i)-1})>d(x_i,R_{k(i)-1})$ or
$d(x_i,U_{k(i)-1})=d(x_i,R_{k(i)-1})$, provided $i$ is sufficiently large.

Actually we will show that, (up to equivalence), there are only
three limit functions.

Let us introduce the following sequence of vertices $\{c_n\}$,
where $c_n:=r^nu$. Geometrically the $c_n$'s are points \textit{symmetric} with respect to $o$, more
precisely there are two paths from $c_n$ to $o$ which realize the
distance.

\begin{teo}\label{teo2}
There are infinitely many horofunctions in the graph
$\Gamma_{l^{\infty}}$. More precisely:
\begin{enumerate}
  \item the function $f_U$ corresponding to the Busemann points $\lim
  U_n$;
  \item the function $f_R$ corresponding to the Busemann points $\lim
  R_n$;
  \item infinitely many functions equivalent to the function $f_c$
  obtained as $\lim f_n$ associated with the sequence $\{c_n\}$.
\end{enumerate}
\end{teo}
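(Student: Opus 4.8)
The plan is to use the self-similar structure of $\Gamma:=\Gamma_{l^{\infty}}$ to make every distance completely explicit in terms of the distances to the three corners, and then to read off the horofunctions by substitution. Recall that $\Gamma^{n+1}:=\Gamma_{l^{\infty}}^{n+1}$ is built from copies $A_{n+1},B_{n+1},C_{n+1}$ of $\Gamma^{n}$, with $A_{n+1}=\Gamma^{n}$ the left copy containing $o=L_{n}=L_{n+1}$, the gluings being $A_{n+1}\cap B_{n+1}=\{U_{n}\}$, $A_{n+1}\cap C_{n+1}=\{R_{n}\}$, $B_{n+1}\cap C_{n+1}=\{c_{n}\}$, and $U_{n+1},R_{n+1}$ the remaining corners of $B_{n+1},C_{n+1}$. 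Two structural facts do all the work: each $\Gamma^{m}$ (and each $B_{n+1},C_{n+1}$) is isometrically embedded in $\Gamma$, and $\{U_{n},R_{n}\}$ separates $\Gamma^{n}$ from $\Gamma\setminus\Gamma^{n}$; also $d(U_{n},L_{n})=d(L_{n},R_{n})=d(U_{n},R_{n})=2^{n-1}$. Fix a vertex $y$, let $m=m(y)$ be least with $y\in\Gamma^{m}$, and for $n\ge m$ put $\alpha_{n}(y)=d(y,U_{n})$, $\beta_{n}(y)=d(y,R_{n})$. Splitting a geodesic from $y$ to $U_{n+1}$ at the cut vertices $U_{n},R_{n}$ gives $\alpha_{n+1}(y)=\min\{\alpha_{n}(y)+2^{n-1},\ \beta_{n}(y)+2^{n}\}$, and since $\alpha_{n}(y)\le\beta_{n}(y)+d(U_{n},R_{n})=\beta_{n}(y)+2^{n-1}$ the first alternative always wins; hence $\alpha_{n+1}(y)=\alpha_{n}(y)+2^{n-1}$ and, symmetrically, $\beta_{n+1}(y)=\beta_{n}(y)+2^{n-1}$. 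Therefore, for all $n\ge m$,
$$\alpha_{n}(y)=A(y)+2^{n-1},\qquad\beta_{n}(y)=B(y)+2^{n-1},$$
where $A(y):=d(y,U_{m})-2^{m-1}$ and $B(y):=d(y,R_{m})-2^{m-1}$ both lie in $[-d(y,o),d(y,o)]$.

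From this everything follows by arithmetic. Since $d(U_{n},o)=d(R_{n},o)=2^{n-1}$, for $n\ge m(y)$ one has $f_{n}(y)=d(U_{n},o)-d(U_{n},y)=2^{n-1}-\alpha_{n}(y)=-A(y)$, so $\lim U_{n}$ is the horofunction $f_{U}=-A$, and similarly $\lim R_{n}=f_{R}=-B$; both are Busemann points, because $d(o,U_{n+1})=2^{n}=d(o,U_{n})+d(U_{n},U_{n+1})$ makes $[o,U_{1}]\cup[U_{1},U_{2}]\cup\cdots$ a geodesic ray whose Busemann function is $f_{U}$ (likewise for $\{R_{n}\}$). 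For $c_{n}=r^{n}u$ one has $d(c_{n},o)=2^{n}$ (attained by the two symmetric geodesics through $U_{n}$ and through $R_{n}$) and $d(c_{n},U_{n})=d(c_{n},R_{n})=2^{n-1}$; since $y\in A_{n+1}$ is reached from $c_{n}$ only through $U_{n}$ or $R_{n}$, $d(c_{n},y)=2^{n-1}+\min\{\alpha_{n}(y),\beta_{n}(y)\}$, so $f_{n}(y)=d(c_{n},o)-d(c_{n},y)=2^{n-1}-\min\{\alpha_{n}(y),\beta_{n}(y)\}=-\min\{A(y),B(y)\}$ for $n\ge m(y)$. Thus $\lim f_{n}$ along $\{c_{n}\}$ exists and equals $f_{c}=\max\{f_{U},f_{R}\}$. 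To separate the three classes, evaluate on the two geodesic rays: $A(U_{m})=-2^{m-1}$, $B(U_{m})=0$, and symmetrically at $R_{m}$, so $f_{U}(U_{m})=2^{m-1}$, $f_{R}(U_{m})=0$, $f_{c}(U_{m})=2^{m-1}$, while $f_{U}(R_{m})=0$, $f_{R}(R_{m})=2^{m-1}$, $f_{c}(R_{m})=2^{m-1}$; hence $f_{U}-f_{R}$, $f_{c}-f_{U}$ and $f_{c}-f_{R}$ are unbounded (on $\{U_{m}\}$, on $\{R_{m}\}$, and on $\{U_{m}\}$ respectively), so $f_{U},f_{R},f_{c}$ represent three distinct equivalence classes.

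For the infinitely many functions equivalent to $f_{c}$: for $N\ge1$ and $n$ with $2^{n}>4N$, let $x_{n}^{(N)}$ be the vertex at distance $N$ from $c_{n}$ on a geodesic inside $B_{n+1}$ joining $c_{n}$ to $U_{n}$. Using the $2$-cut again one gets $d(x_{n}^{(N)},U_{n})=2^{n-1}-N$, $d(x_{n}^{(N)},R_{n})=2^{n-1}+N$, $d(x_{n}^{(N)},o)=2^{n}-N$, and, for $y\in\Gamma^{m}$ and $n$ large, $d(x_{n}^{(N)},y)=\min\{(2^{n-1}-N)+\alpha_{n}(y),\ (2^{n-1}+N)+\beta_{n}(y)\}$. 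Substituting the formulas for $\alpha_{n},\beta_{n}$ gives $f_{n}(y)=d(x_{n}^{(N)},o)-d(x_{n}^{(N)},y)=-N-\min\{A(y)-N,\ B(y)+N\}=\max\{f_{U}(y),\ f_{R}(y)-2N\}$, so the limit $f^{(N)}:=\max\{f_{U},f_{R}-2N\}$ exists (note $d(o,x_{n}^{(N)})\to\infty$); it satisfies $|f^{(N)}-f_{c}|\le2N$, hence $f^{(N)}\sim f_{c}$, yet $f^{(N)}(R_{m})=\max\{0,2^{m-1}-2N\}=2^{m-1}-2N$ once $2^{m-1}>2N$, so the $f^{(N)}$, $N\ge1$, are pairwise distinct. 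Together with $f_{U}$ and $f_{R}$ this exhibits infinitely many horofunctions and proves the theorem. (That these are, up to equivalence, all horofunctions, as announced before the statement, would follow by combining the above with Lemma~\ref{lemma2}: if $\lim f_{n}$ exists then for large $i$ the comparison of $d(x_{i},U_{k(i)-1})$ with $d(x_{i},R_{k(i)-1})$ is always $<$, always $>$, or always $=$, and the same cut-vertex bookkeeping identifies the limit with $f_{R}$, $f_{U}$, or a function equivalent to $f_{c}$.)

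The main obstacle is the initial reduction: one must check carefully that the sub-triangles $\Gamma^{n}$ — and the auxiliary copies $B_{n+1},C_{n+1}$ — embed isometrically in $\Gamma_{l^{\infty}}$ and that $\{U_{n},R_{n}\}$ is a genuine $2$-cut, so that every distance from inside $\Gamma^{n}$ to a point outside splits as the minimum of two ``through a corner'' lengths; this is intuitively clear from the gluing but needs an honest induction. Once it is in place, the recursion for $\alpha_{n},\beta_{n}$ together with the triangle-inequality bound $\alpha_{n}\le\beta_{n}+2^{n-1}$ trivialises all distances, and the remaining steps are pure substitution. A related minor point, used in the last step, is that the local picture around $c_{n}$ — in particular $d(x_{n}^{(N)},U_{n})$ and $d(x_{n}^{(N)},R_{n})$ — stabilises for $n$ large, which is again immediate from the $2$-cut property.
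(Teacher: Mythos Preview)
Your argument is correct and is in fact more explicit than the paper's. You set up the two recursions $\alpha_{n+1}=\alpha_n+2^{n-1}$, $\beta_{n+1}=\beta_n+2^{n-1}$ via the cut $\{U_n,R_n\}$, solve them, and read off closed forms $f_U=-A$, $f_R=-B$, $f_c=\max\{f_U,f_R\}$ and $f^{(N)}=\max\{f_U,f_R-2N\}$; from these the three inequivalence statements and the existence of infinitely many pairwise distinct horofunctions in the class of $f_c$ are immediate computations. The paper argues more qualitatively: it checks $f_c(U_n)=f_c(R_n)=2^{n-1}$ to separate the classes, asserts (without an explicit construction like your $x_n^{(N)}$) that sequences at bounded distance from $\{c_n\}$ yield horofunctions equivalent but not equal to $f_c$, and then devotes the bulk of the proof to the converse direction you only sketch parenthetically, namely that any convergent sequence \emph{not} at bounded distance from $\{c_n\}$ must, via Lemma~\ref{lemma2} and a direct distance comparison, give $f_U$ or $f_R$. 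So the emphases are complementary: your route makes the ``infinitely many'' part of item~(3) completely transparent and yields useful explicit formulas, while the paper's route is aimed at the exhaustive ``only three equivalence classes'' statement announced before the theorem. The one point you flag yourself---that $\Gamma^n$ and the copies $B_{n+1},C_{n+1}$ sit isometrically in $\Gamma_{l^\infty}$ and that $\{U_n,R_n\}$ genuinely separates---is used implicitly in both proofs and is straightforward by induction on the gluing.
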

\begin{proof}
It is clear that $f_U$ and $f_R$ are non-equivalent horofunctions.
The function $f_c$ is not equivalent to $f_U$ and $f_R$. In fact
$f_c(U_n)=f_c(R_n)=2^{n-1}$, as one can easily check.

Moreover, all the sequences of points $\{x_n\}$ satisfying conditions of
Lemma (\ref{lemma2}) at bounded distance from $\{c_n\}$ (i.e., there
exist $M>0$ so that for every $n$ there exists $k$ such that
$d(x_n,c_k)<M$) give rise to horofunctions not equal but
equivalent to $f_c$. It remains to prove that if $\{x_n\}$ is a sequence
whose limit $f$ exists, and such that it is not at bounded distance
from $\{c_n\}$, then either $f=f_U$ or $f=f_R$. From Lemma
(\ref{lemma2}) we can suppose that, for $n$ sufficiently large, $x_n$ is a vertex in the upper
part of $\Gamma_w^k$, for some $k=k(n)$ (the case in which $x_n$ definitively belongs to the right part of $\Gamma_w^k$ is symmetric and left to the reader). Denote by $\gamma_n:=\min_k
d(x_n, c_k)$, by our assumption $\gamma_n\rightarrow\infty$. Fix a
vertex $y$, then there exists $h$ to be the minimum index such that $y\in \Gamma_w^h$.
We want to prove that $f(y)=f_U(y)$. We use the notation
$U_{k(n)}$ and $R_{k(n)}$ as before. We have that
$$
d(x_n,y)=
  \begin{cases}
    d(x_n,U_{k(n)-1})+d(U_{k(n)-1},y) & \text{or}, \\
    d(x_n, c_{k(n)})+d(c_{k(n)},R_{k(n)-1})+d(R_{k(n)-1},y) & .
  \end{cases}
$$
In order to prove that, for $n$ large enough the distance is given by the first of the two expressions above, we observe that 
$$d(x_n,U_{k(n)-1})\leq
d(c_{k(n)},R_{k(n)-1})=2^{k(n)-1},\ d(U_{k(n)-1},y)\leq
2^{k(n)-1}
$$ 
and $d(R_{k(n)-1},y)\geq 2^{k(n)-1}-2^h$. This gives
$$
2^{k(n)-1}<\gamma_n+2^{k(n)-1}-2^h \Leftrightarrow \ 2^h<\gamma_n,
$$
which is verified for $n$ large enough. So in the limit
\begin{eqnarray*}
f(y)&=& \lim_n(d(x_n, o)-d(x_n, y)) \\
&=& \lim_n(d(x_n, U_{k(n)-1})+ d(
U_{k(n)-1},o)-d(x_n,U_{k(n)-1})-d(
U_{k(n)-1},y))\\
&=& f_U(y).
\end{eqnarray*}
\end{proof}

\begin{os}\rm
The Busemann function $f_U=\lim f_n$ associated with the point
$U=\lim U_n$ can be easily described in the following way. Project
each vertex $y$ of the graph on the geodesic ray connecting $o$ to
$U$, denote by $y_U$ the image of the projection. then
$f_U(y)=d(o,y_U)$. This follows from the fact that for each $y$
the value $f_U(y)$ can be computed as difference of distances in a
finite graph $\Gamma^n_w$.

The same can be said for $f_R$.
\end{os}
\begin{center}
\begin{picture}(500,280)
\letvertex A=(180,210)\letvertex B=(150,160)\letvertex C=(120,110)

\letvertex D=(90,60)\letvertex E=(60,10)\letvertex F=(120,10)\letvertex G=(180,10)

\letvertex H=(240,10)\letvertex I=(300,10)
\letvertex L=(270,60)\letvertex M=(240,110)\letvertex N=(210,160)

\letvertex O=(180,110)\letvertex P=(150,60)\letvertex Q=(210,60)
\letvertex R=(250,210) \letvertex S=(360,10)
\letvertex T=(210,250)\letvertex U=(330,60)

\put(210,270){$f_U$} \put(370,10){$f_R$}

\drawvertex(A){$\bullet$}\drawvertex(B){$\bullet$}
\drawvertex(C){$\bullet$}\drawvertex(D){$\bullet$}
\drawvertex(E){$\ast$}\drawvertex(F){$\bullet$}
\drawvertex(G){$\bullet$}\drawvertex(H){$\bullet$}
\drawvertex(I){$\bullet$}\drawvertex(L){$\bullet$}\drawvertex(M){$\bullet$}
\drawvertex(N){$\bullet$}\drawvertex(O){$\bullet$}
\drawvertex(P){$\bullet$}\drawvertex(Q){$\bullet$}

\drawedge(A,T){}\drawundirectededge(A,R){}\drawedge(I,S){}\drawundirectededge(I,U){}
\drawundirectededge(E,D){} \drawundirectededge(D,C){}
\drawundirectededge(C,B){} \drawundirectededge(B,A){}
\drawundirectededge(A,N){} \drawundirectededge(N,M){}
\drawundirectededge(M,L){} \drawundirectededge(L,I){}
\drawundirectededge(I,H){} \drawundirectededge(H,G){}
\drawundirectededge(G,F){} \drawundirectededge(F,E){}
\drawundirectededge(N,B){} \drawundirectededge(O,C){}
\drawundirectededge(M,O){} \drawundirectededge(P,D){}
\drawundirectededge(L,Q){} \drawundirectededge(B,O){}
\drawundirectededge(O,N){} \drawundirectededge(C,P){}
\drawundirectededge(P,G){} \drawundirectededge(D,F){}
\drawundirectededge(Q,M){}
\drawundirectededge(G,Q){}\drawundirectededge(H,L){}\drawundirectededge(F,P){}
\drawundirectededge(Q,H){}

\letvertex Y=(310,150)\letvertex
Z=(340,180)\drawedge(Y,Z){}\put(350,200){$f_c$}\drawvertex(Y){$\bullet$}
\put(300,135){$c_n$}

\put(60,-20){\textbf{Fig.} Horofunctions in
$\Gamma_{l^{\infty}}$ (up to equivalence)}
\end{picture}
\end{center}
\vspace{1cm}

\section*{Acknowledgments}
Some of the results contained in this paper were obtained during my staying at Technion University of Haifa. Among the others, I want to thank Uri Bader, Uri Onn, Amos Nevo, Michael Brandenbursky and Vladimir Finkelshtein for useful discussions. Moreover I am grateful to the anonymous referees for recommending various improvements in exposition.\\


\end{document}